\newtheorem*{propositiona}{Proposition}
\newtheorem*{theorema}{Lemma}
\newtheorem*{theoremb}{Proposition}
\newtheorem*{theoremc}{Theorem 3}
\newtheorem*{theoremd}{Change of Variable Formula}
\newtheorem*{theoreme}{Substitution Formula}
\newtheorem*{theoremf}{Proof, Substitution Formula}
\newcommand{\R}{{\mathbb{R}}}
\newcommand{\taf}{{\hskip 5pt} $\blacksquare$
                  \renewcommand{\qedsymbol}{}}
\begin{document}
\title{The change of variable formula for the Riemann-Stieltjes integral}
\author{Alberto Torchinsky}

\date{April 1, 2019}



\maketitle
\begin{abstract}
We  consider   general  formulations of the  
change of variable formula for the Riemann-Stieltjes integral, including the case when the substitution is not invertible.
\end{abstract}
\section{Introduction.}
This note concerns  general  formulations of the  
change of variable, or substitution, formula for the Riemann-Stieltjes integral. A prototype of our results is the following,
\begin{theoreme}  
 Let $\varphi$ be a bounded,  Riemann integrable function defined on an interval $I=[a,b]$ that does not change sign on $I$, and let $\Phi$ be an indefinite integral of $\varphi$ on $I$.
Let $\psi$ be a bounded, Riemann integrable function defined on $\Phi(I)$, the range of $\Phi$, 
and let $\Psi$ be an indefinite integral of $\psi$ on $\Phi(I)$.

Then, if a bounded function $f$  defined on $\Phi(I)$ is   Riemann  integrable with respect to $\Psi$ on $\Phi(I)$, $ f(\Phi)\psi(\Phi) $ is Riemann  integrable with respect to $\Phi$ on $I$, and in that case, with $\mathcal I=[\Phi(a),\Phi(b)],$ 
\begin{equation} \int_{\mathcal I} f d\Psi= \int_I  f(\Phi)\,\psi(\Phi) d\Phi.
\end{equation}
\end{theoreme}

Thus, the substitution formula holds when the Riemann-Stieltjes integral is computed with respect to an arbitrary function $\Psi$, and the  substitution $\Phi$ is invertible. Together with the change of variable formula established below, which holds when $\Psi$ is monotone, or a difference of monotone functions,  and $\Phi$ is not necessarily invertible, these formulas constitute the main results in this note. Though it seems that these  formulas should be  known, they are not present in the classic or standard literature in the area.

We will begin by introducing the necessary definitions and notations. 
Fix a closed finite interval  $I=[a,b]\subset \R$, and let $\Phi$ be a continuous monotone (increasing) function defined on $I$. For a partition $\mathcal P=\{I_k\}$ of $I$, where $I_k=[x_{k,l},x_{k,r}]$, and a bounded function $f$  on $I$, let  
 $U(f,\Phi, \mathcal{P})$ and  $L(f,\Phi, \mathcal{P})$ denote  the  upper and lower Riemann sums of $f$ with respect to $\Phi$ on $I$ along $\mathcal{P}$,  i.e.,
\[U(f,\Phi, \mathcal{P})=\sum_k \big(\sup_{I_k}f\big)\, \big(\Phi(x_{k,r})- \Phi(x_{k,l})\big),\]
and  
\[ L(f,\Phi, \mathcal{P})=\sum_k \big(\inf_{I_k}f\big)\, \big(\Phi(x_{k,r})- \Phi(x_{k,l})\big),
\]
respectively, and set
 \[ U(f,\Phi)=\inf_{\mathcal{P}}\, U(f,\Phi,\mathcal{P}),\quad{\rm{and}} \quad
 L(f,\Phi)=\sup_{\mathcal{P}} L(f,\Phi, \mathcal{P}).
 \]
 
We say that $f$ is Riemann integrable with respect to $\Phi$ on $I$ if $U(f,\Phi)=L(f,\Phi)$, and in this case the common value is denoted $\int_I f\,d\Phi$, the Riemann integral of $f$ with respect to $\Phi$ on $I$. 

When $\Phi(x)=x$ one gets the usual Riemann integral on $I$, and  $\Phi$ is omitted in the above notations. And, throughout this note, when it is clear from the context, integrable means Riemann integrable with respect to $\Phi(x)=x$, and Riemann-Stieltjes integrable means integrable with respect to a  general $\Phi$.
 
The following are working characterizations of integrability \cite {Bruckner}, \cite{Hunter},  \cite{Torch}. 
A bounded function  $f$ defined on $I$ is Riemann integrable with respect to $\Phi$  on $I$ iff,  given $\varepsilon >0$, there is a partition ${\mathcal{P}}$ of $I$, which 
may  depend on $\varepsilon$,  such that 
\begin{equation}  U(f,\Phi,{\mathcal{P}})-L(f,\Phi,{\mathcal {P}})\le \varepsilon.
\end{equation}

Furthermore, a sequential characterization holds, to wit,
 (2) is equivalent to the existence of a sequence  of partitions $\{\mathcal P_n\}$ of $I$ such that
\begin{equation*}
\lim_{n}  \big( U(f,\Phi,\mathcal {P}_n) - L(f,\Phi, \mathcal {P}_n) \big) = 0,
\end{equation*}
and  in this case  
\begin{equation} \lim_{n} U(f,\Phi, \mathcal P_n) = \lim_{n}L(f, \Phi, {\mathcal P}_n)=
\int_I f\,d\Phi.
\end{equation}

Also, integrability can be characterized in terms of  oscillations. Recall that, given a bounded function $f$ defined on $I$  and an interval $J\subset I$, the oscillation ${\rm {osc\, }}(f, J)$ of $f$ on $J$ is defined as 
$ {\rm {osc\, }}(f, J) = \sup_J f - \inf_J f$. Then, a bounded function $f$ is Riemann integrable with respect to $\Phi$ on $I$ iff,  given $\varepsilon >0$, there is a partition ${\mathcal{P}}=\{I_k\}$ of $I$, which 
may  depend on $\varepsilon$,  such that 
\begin{equation}  \sum_k {\rm {osc\, }}( f, I_k)\,\big(\Phi(x_{k,r})- \Phi(x_{k,l})\big)\le\varepsilon.
\end{equation}

And, a sequential characterization  holds, to wit,  (4) is equivalent to the existence of a sequence of partitions  $\{\mathcal {P}_n\}$  of $I$ consisting of the intervals $\mathcal {P}_n=\{I_k^n\}$ with $  I_k^n = [x_{k,l}^n, x_{k,r}^n]$, such that 
\begin{equation} \lim_n \sum_k {\rm {osc\, }}( f, I_k^n)\,\big(\Phi(x_{k,r}^n)- \Phi(x_{k,l}^n)\big)=0.
\end{equation}

These characterizations do not necessarily hold if $\Phi$ fails to be monotone. 
Moreover, note that if (2) holds for a  partition ${\mathcal P}$, 
it also holds for  partitions  ${\mathcal{P}'}$ 
 finer than ${\mathcal{P}}$. Invoking (36), this  observation  applies to other concepts as well, including (3), (4), and (5).

Finally, since $\Phi$  is continuous and increasing on $I$, $\Phi(I)$
is an interval $\mathcal I=[\Phi(a), \Phi(b)]$  with endpoints $\Phi(a)$ and $ \Phi(b)$.  Note that each interval $\mathcal J=[y_1,y_2] \subset \mathcal I$ is of the form $[\Phi(x_1),\Phi(x_2)]$, where $\Phi(x_1)=y_1, \Phi(x_2)=y_2$, and $[x_1, x_2]$ is a subinterval of $I$. Moreover, partitions $\mathcal P$ of $I$  induce a corresponding partition $\mathcal Q$ of $\mathcal I$, and, conversely,  every partition of $\mathcal I$ can be expressed as $\mathcal Q$ for some partition $\mathcal P$ of $I$.

\section{The Substitution Formula.}

We begin by proving a  result that includes the familiar  substitution formula \cite{Apostol},
\begin{theoremb} Let $\Phi$ be a continuous  monotone function defined on $I$, and $\Psi$ 
defined on $\mathcal I= \Phi(I)$. Let $f$ be a bounded function on $\mathcal I$.\!\!  Then,  $f$ is Riemann integrable with respect to $\Psi$ on $\mathcal I$ iff $ f(\Phi)$ is Riemann  integrable with respect to $\Psi(\Phi)$ on $I$,
and  in that case  we have 
\begin{equation} \int_{\mathcal I} f\, d\Psi=\int_I f(\Phi)\, d\Psi(\Phi).
\end{equation}
\end{theoremb}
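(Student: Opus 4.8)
The plan is to transfer everything to the level of upper and lower sums, exploiting the partition correspondence noted at the end of the introduction, so that the two sides are identified literally term by term. First I would fix a partition $\mathcal P=\{I_k\}$ of $I$, with $I_k=[x_{k,l},x_{k,r}]$, and associate with it the intervals $\mathcal J_k=[\Phi(x_{k,l}),\Phi(x_{k,r})]$. Since $\Phi$ is continuous and monotone, the intermediate value theorem gives $\Phi(I_k)=\mathcal J_k$ exactly, and the $\mathcal J_k$ (after discarding any degenerate ones; see below) tile $\mathcal I$ and form the partition $\mathcal Q$ of $\mathcal I$ induced by $\mathcal P$. Conversely, as remarked in the introduction, every partition $\mathcal Q$ of $\mathcal I$ arises this way: lifting the strictly increasing division points of $\mathcal Q$ back through $\Phi$ yields a genuine partition $\mathcal P$ of $I$ whose $\Phi$-images are exactly the points of $\mathcal Q$.

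The heart of the argument is the termwise identification of the sums. The integrator increments coincide: the factor $\Psi(\Phi(x_{k,r}))-\Psi(\Phi(x_{k,l}))$ appearing in $U(f(\Phi),\Psi(\Phi),\mathcal P)$ is exactly the increment of $\Psi$ across $\mathcal J_k$ appearing in $U(f,\Psi,\mathcal Q)$. Moreover, because $\Phi(I_k)=\mathcal J_k$ we have $\{f(\Phi(x)):x\in I_k\}=\{f(y):y\in\mathcal J_k\}$, so $\sup_{I_k}f(\Phi)=\sup_{\mathcal J_k}f$ and $\inf_{I_k}f(\Phi)=\inf_{\mathcal J_k}f$. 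Hence the upper sums agree termwise, and likewise the lower sums, giving
\[ U(f(\Phi),\Psi(\Phi),\mathcal P)=U(f,\Psi,\mathcal Q)\quad\text{and}\quad L(f(\Phi),\Psi(\Phi),\mathcal P)=L(f,\Psi,\mathcal Q). \]
The one point requiring care — and the main obstacle, since $\Phi$ is assumed only weakly monotone — is that $\Phi$ need not be strictly increasing, so some $\mathcal J_k$ may collapse to a point and the map $\mathcal P\mapsto\mathcal Q$ is not injective. This is harmless: a degenerate $\mathcal J_k$ carries increment $\Psi(\Phi(x_{k,r}))-\Psi(\Phi(x_{k,l}))=0$, so such terms contribute nothing and may be deleted on passing to the genuine partition $\mathcal Q$, leaving the displayed equalities intact.

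Finally I would pass to the infimum and supremum over all partitions. Using both directions of the correspondence — each $\mathcal P$ yields a $\mathcal Q$ with equal sums, and each $\mathcal Q$ is realized by some $\mathcal P$ — the set of upper sums on the two sides coincides, as does the set of lower sums, whence $U(f(\Phi),\Psi(\Phi))=U(f,\Psi)$ and $L(f(\Phi),\Psi(\Phi))=L(f,\Psi)$. Consequently $U(f(\Phi),\Psi(\Phi))=L(f(\Phi),\Psi(\Phi))$ holds if and only if $U(f,\Psi)=L(f,\Psi)$; that is, $f(\Phi)$ is Riemann integrable with respect to $\Psi(\Phi)$ on $I$ exactly when $f$ is Riemann integrable with respect to $\Psi$ on $\mathcal I$, and then the common values agree, which is precisely the identity (6). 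I note that this reasoning is purely formal at the level of upper and lower sums and therefore does not require $\Psi$ itself to be monotone.
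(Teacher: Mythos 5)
Your proposal is correct and follows essentially the same route as the paper: the termwise identification $U(f,\Psi,\mathcal Q)=U(f(\Phi),\Psi(\Phi),\mathcal P)$ and $L(f,\Psi,\mathcal Q)=L(f(\Phi),\Psi(\Phi),\mathcal P)$ via the induced-partition correspondence, followed by taking infima and suprema over all partitions. You merely make explicit two points the paper leaves implicit --- that $\sup_{I_k}f(\Phi)=\sup_{\mathcal J_k}f$ rests on $\Phi(I_k)=\mathcal J_k$, and that degenerate image intervals contribute nothing when $\Phi$ is only weakly monotone --- which is a welcome but not essentially different elaboration.
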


\begin{proof}
Specifically,  (6) means that, if the integral on either side of the equality exists, so does the integral on the other side and they are equal. 
To see this, let the  partition  $\mathcal Q =\{\mathcal I_k\}$ of $\mathcal I$ correspond to the partition $\mathcal P=\{ I_k\}$ of $I$ such that $\mathcal I_k =[\Phi(x_{k,l}),\Phi(x_{k,r}) ]$, where $I_k=[x_{k,l}, x_{k,r}]$. Then, since
$  \sup_{{\mathcal I}_k} f = \sup_{I_k} f(\Phi)$, 
it readily follows that
\begin{align*}
U(f,\Psi, &\mathcal Q) = \sum_k \big(\sup_{{\mathcal I}_k} f\big)\, \big(\Psi( \Phi(x_{k,r}))-
\Psi(\Phi(x_{k,l}))\big)\\
&= \sum_k 
 \big(\sup_{I_k} f(\Phi)\big)\, \big(\Psi(\Phi(x_{k,r})) -\Psi(\Phi(x_{k,l}))\big) = U(f(\Phi), \Psi(\Phi), \mathcal P),
\end{align*}  
and, similarly,
$ L(f,\Psi, \mathcal Q)= L(f(\Phi), \Psi(\Phi), \mathcal P).$ (6) follows at once from these identities.
\taf
\end{proof}

Next we consider the   particular case of the substitution formula  when both $\varphi$ and $\psi$ are of constant sign. In this instance we have,

\begin{theorema} 
Let $\varphi$ be a bounded,  Riemann integrable function defined on an interval $I=[a,b]$ that does not change sign on $I$, 
and let $\Phi$ be an indefinite integral of $\varphi$ on $I$.
Let $\psi$ be a bounded,  Riemann integrable function defined on $\Phi(I)$, the range of $\Phi$, that does not change sign, 
and let $\Psi$ be an indefinite integral of $\psi$ on $\Phi(I)$.

Let  $f$ be bounded on $\Phi(I)$. Then,  $f$ is   integrable with respect to $\Psi$ on $\Phi(I)$   iff $ f(\Phi)\psi(\Phi)$  is  integrable with respect to $\Phi$ on $I$, and  in that case, with $\mathcal I=[\Phi(a), \Phi(b)]$, 
\begin{equation} \int_{\mathcal I} f\,d\Psi= \int_I  f(\Phi)\,\psi(\Phi)\,d\Phi.
\end{equation}
\end{theorema}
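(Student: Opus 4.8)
The plan is to reduce (7) to two separate reductions over $I$: the first is exactly the Proposition just established, and the second is a ``density'' lemma that converts the integrator $\Psi(\Phi)$ into the factor $\psi(\Phi)$ taken against $d\Phi$. Because $\varphi$ and $\psi$ do not change sign, I first normalize. It suffices to treat the case $\varphi\ge0$, $\psi\ge0$, so that $\Phi$ and $\Psi$ are continuous and increasing and the framework of Section 1 applies; the remaining sign combinations follow by applying this case to $\pm\varphi$, $\pm\psi$ and reconciling signs, the Riemann--Stieltjes integral with respect to a decreasing function being taken as the negative of the one with respect to its increasing negative.

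I would then apply the Proposition, equation (6), with integrator $\Psi$. It gives directly that $f$ is integrable with respect to $\Psi$ on $\mathcal I$ iff $g:=f(\Phi)$ is integrable with respect to $\Xi:=\Psi(\Phi)$ on $I$, together with $\int_{\mathcal I}f\,d\Psi=\int_I g\,d\Xi$. Writing $\chi:=\psi(\Phi)$, this reduces (7) to the single assertion that $g$ is integrable with respect to $\Xi$ on $I$ iff $g\chi$ is integrable with respect to $\Phi$ on $I$, with $\int_I g\,d\Xi=\int_I g\chi\,d\Phi$. To set this up I identify $\Xi$ as an indefinite integral of $\chi$ with respect to $\Phi$: applying the Proposition once more, now with the identity in place of $\Psi$ and on the subinterval $[a,x]$, I get $\int_{[\Phi(a),\Phi(x)]}\psi\,dt=\int_{[a,x]}\psi(\Phi)\,d\Phi$; since $\Psi$ is an indefinite integral of $\psi$, the left-hand side is $\Psi(\Phi(x))-\Psi(\Phi(a))$, whence $\Xi(x)-\Xi(a)=\int_a^x\chi\,d\Phi$. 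The same application shows $\chi$ is $\Phi$-integrable, and $\chi$ is of one sign because $\psi$ is.

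The crux is the density lemma: if $\Xi(x)-\Xi(a)=\int_a^x\chi\,d\Phi$ with $\chi\ge0$ bounded and $\Phi$-integrable, then a bounded $g$ is $\Xi$-integrable iff $g\chi$ is $\Phi$-integrable, with equal integrals. By additivity, $\Delta\Xi_k:=\Xi(x_{k,r})-\Xi(x_{k,l})=\int_{I_k}\chi\,d\Phi$, so $(\inf_{I_k}\chi)\,\Delta\Phi_k\le\Delta\Xi_k\le(\sup_{I_k}\chi)\,\Delta\Phi_k$, with $\Delta\Phi_k=\Phi(x_{k,r})-\Phi(x_{k,l})$. After the harmless reduction to $g\ge0$ (adding a constant $c$ to $g$ alters each side by $c\int_I\chi\,d\Phi=c(\Xi(b)-\Xi(a))$), on each $I_k$ one has
\[(\sup_{I_k}g)(\inf_{I_k}\chi)\,\Delta\Phi_k\ \le\ \sup_{I_k}(g\chi)\,\Delta\Phi_k\ \le\ (\sup_{I_k}g)(\sup_{I_k}\chi)\,\Delta\Phi_k,\]
and $(\sup_{I_k}g)\,\Delta\Xi_k$ lies in this same range. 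Summing, both $U(g\chi,\Phi,\mathcal P)$ and $U(g,\Xi,\mathcal P)$ lie between $\sum_k(\sup_{I_k}g)(\inf_{I_k}\chi)\,\Delta\Phi_k$ and $\sum_k(\sup_{I_k}g)(\sup_{I_k}\chi)\,\Delta\Phi_k$, whose difference is at most $M\sum_k\mathrm{osc}(\chi,I_k)\,\Delta\Phi_k$ with $M=\sup_I g$; the identical estimate holds for the lower sums.

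Since $\chi$ is $\Phi$-integrable, the oscillation characterizations (4)--(5) furnish partitions along which $\sum_k\mathrm{osc}(\chi,I_k)\,\Delta\Phi_k\to0$; passing to a common refinement with partitions witnessing integrability of whichever of $g$ or $g\chi$ is assumed integrable (refinement preserving all these smallness properties), the estimate yields $U(g\chi,\Phi,\mathcal P)-U(g,\Xi,\mathcal P)\to0$ and the same for lower sums. The sequential criterion (3) then transfers integrability in both directions and forces the two integrals to coincide, completing the chain $\int_{\mathcal I}f\,d\Psi=\int_I g\,d\Xi=\int_I g\chi\,d\Phi=\int_I f(\Phi)\psi(\Phi)\,d\Phi$. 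I expect the main obstacle to be precisely this lemma: producing one sequence of partitions that simultaneously annihilates the $\chi$-oscillation sum and certifies the assumed integrability, with the sandwich inequality as the key point. The constant-sign hypotheses enter exactly here, keeping $\Phi,\Psi$ monotone and $\chi$ one-signed so that the inequalities between products of suprema and infima point in the right direction.
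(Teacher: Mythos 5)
Your argument is correct, and it follows the paper's overall strategy --- reduce to $\varphi,\psi>0$, invoke the Proposition (6) to trade $\int_{\mathcal I}f\,d\Psi$ for $\int_I f(\Phi)\,d\Psi(\Phi)$, and then compare the Darboux sums of $f(\Phi)$ with respect to $\Psi(\Phi)$ against those of $f(\Phi)\psi(\Phi)$ with respect to $\Phi$ via the identity $\Psi(\Phi(x_{k,r}))-\Psi(\Phi(x_{k,l}))=\int_{\mathcal I_k}\psi$ and the oscillation control (4)--(5) for $\psi$. Where you genuinely diverge is in how that comparison is carried out. The paper argues one-sidedly: it picks tags $\xi_k$ nearly attaining the supremum, splits $\int_{\mathcal I_k}\psi$ as $\int_{\mathcal I_k}(\psi-\psi(\Phi(\xi_k)))+\psi(\Phi(\xi_k))|\mathcal I_k|$, applies the resulting inequality to $\pm f$, and then must run a separate converse in which the summands are split according to the sign of $f(\Phi(\xi_k))$. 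You instead normalize $g=f(\Phi)$ to be nonnegative (harmless by linearity, since the correction term $c\chi$ is $\Phi$-integrable with integral $c(\Xi(b)-\Xi(a))$) and obtain a symmetric two-sided sandwich: both $\sup_{I_k}(g\chi)\,\Delta\Phi_k$ and $(\sup_{I_k}g)\,\Delta\Xi_k$ lie between $(\sup_{I_k}g)(\inf_{I_k}\chi)\,\Delta\Phi_k$ and $(\sup_{I_k}g)(\sup_{I_k}\chi)\,\Delta\Phi_k$, so the two upper sums differ by at most $M\sum_k\mathrm{osc}(\chi,I_k)\,\Delta\Phi_k$, and likewise for the lower sums. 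This buys you both directions of the equivalence in a single estimate, eliminating the sign case analysis that occupies the second half of the paper's proof; the price is the preliminary normalization $g\ge 0$ and an appeal to linearity of the integral, which the paper's one-sided route avoids. Your identification of $\Psi(\Phi)$ as a $\Phi$-indefinite integral of $\psi(\Phi)$ (via the Proposition applied with the identity integrator) is a clean reformulation of the same fact the paper uses implicitly, and the rest --- common refinements preserving (2), (4) and the sequential criterion (3) --- matches the paper.
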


\begin{proof} It suffices to prove the  result when $\varphi, \psi$ are positive. Indeed, if the result holds in this case, when  $\varphi$ is negative it follows by replacing $\varphi$ by $-\varphi$,  $\psi(x)$ by $\psi(-x)$, and $f(y)$ by $f(-y)$  in (7), and  when $\psi$ is negative, by replacing $\psi$ by $-\psi$ in (7). 

Now, by assumption  we have, 
\begin{equation*} \Phi(x)=\Phi(a)+ \int_{[a,x]} \varphi,\quad  x\in I, 
\end{equation*}
and, 
\begin{equation*} \Psi(y)=\Psi(\Phi(a))+ \int_{[\Phi(a),y]} \psi,\quad  y\in \Phi(I).
\end{equation*}

First, assume that $ f(\Phi) \psi(\Phi) $ is Riemann-Stieltjes integrable, and fix $\varepsilon >0$. Then, for a partition $\mathcal P=\{I_k\}$ of  $I$ with $I_k=[x_{k,l},x_{k,r}]$ and 
$\mathcal I_k=[ \Phi(x_{k,r}), \Phi(x_{k,l})]$, pick $ \xi_k\in I_k$ such that
\begin{equation} U( f(\Phi), \Psi(\Phi), \mathcal P)\le \sum_k  f(\Phi(\xi_k))\,\big(  \Psi(\Phi (x_{k,r})) - \Psi(\Phi(x_{k,l}))\big)+\varepsilon.
\end{equation}

Now,   the sum on the right-hand side of (8)  equals
\begin{align}\sum_k   f(\Phi (\xi_k)&)\int_{\mathcal I_k} \big(\psi - \psi(\Phi(\xi_k)) \big)\nonumber\\
 &+\sum_k   f(\Phi(\xi_k))\,  \psi(\Phi(\xi_k))  \big( \Phi (x_{k,r})- \Phi (x_{k,l})\big)
=A+B,
\end{align}
say, where clearly, with $M_f$ a bound for $f$,  $ A\le M_f \sum_k {\rm osc}\,(\psi,\mathcal I_k)\,|\mathcal I_k|$, and 
$ B \le U\big( f(\Phi)  \psi(\Phi), \Phi,\mathcal P\big)$.
Thus, by (8) and (9),
\begin{equation} U( f(\Phi), \Psi(\Phi), \mathcal P)\le M_f \sum_k {\rm osc}\,(\psi,\mathcal I_k)\,|\mathcal I_k|
+ U( f(\Phi)  \psi(\Phi), \Phi,\mathcal P)+\varepsilon.
\end{equation}

Applying (10) to $-f$ gives  
\begin{equation*} -L( f(\Phi), \Psi(\Phi), \mathcal P) \le 
 M_f \sum_k  {\rm osc}\,(\psi,\mathcal I_k)\,|\mathcal I_k|  - L( f(\Phi)  \psi(\Phi), \Phi,\mathcal P) +\varepsilon,
\end{equation*}
which added to (10)  yields
\begin{align} U( f(\Phi),\Psi(\Phi), \mathcal P) &- L( f(\Phi), \Psi(\Phi), \mathcal P)
\le 2 M_f \sum_k {\rm osc}\,(\psi,\mathcal I_k)\,|\mathcal I_k|\nonumber\\ 
&+\big(
U( f(\Phi)  \psi(\Phi), \Phi,\mathcal P) -  L( f(\Phi)  \psi(\Phi), \Phi,\mathcal P)\big)  +2\,\varepsilon.
\end{align}

Let the  partition $\mathcal {P}$  of $I$  satisfy simultaneously 
(4) for $\psi$ and (2) for $ f(\Phi)  \psi(\Phi)$ with respect to $\Phi$ for the $\varepsilon>0$ picked at the beginning of the proof; a common refinement of a partition that satisfies  
(4) for $\psi$ and one that satisfies (2) for $ f(\Phi)  \psi(\Phi)$ with respect to $\Phi$ will do.   Then from (11) it  follows that $U( f(\Phi),  \Psi(\Phi), \mathcal P) - L( f(\Phi), \Psi(\Phi),  \mathcal P) \le 2 M_f\,\varepsilon + \varepsilon +2\, \varepsilon,
$ and,  since $\varepsilon>0$ is arbitrary, by (2),  $f(\Phi)$ is Riemann-Stieltjes integrable 
and $U(f(\Phi),\Psi(\Phi))=L(f(\Phi),\Psi(\Phi))=\int_I f(\Phi)\,d\Psi(\Phi)$. 

To evaluate the integral in question, let the sequence of   partitions $\{\mathcal {P}_n\}$
of $I$  satisfy simultaneously (5)  for $\psi$ and (3) for $f(\Phi)\psi(\Phi)$. Then, given $\varepsilon>0$,  from (10) it follows that
\begin{align*}\int_I f(\Phi)\,&d\Psi(\Phi)= U(f(\Phi),\Psi(\Phi))\le \limsup_n U(f(\Phi),\Psi(\Phi), \mathcal P_n)\\
&\le \limsup_n M_f \sum_k  {\rm {osc\, }}(\psi, \mathcal I_k^n)\,|\mathcal I_k^n|+ \limsup_n  U( f(\Phi)\psi(\Phi),\Phi,\mathcal P_n)+\varepsilon,\\
&{\hskip .53in =\int_I f(\Phi)\psi(\Phi)\,d\Phi +\varepsilon,}
\end{align*}
which, since $\varepsilon$ is arbitrary, gives
$ \int_I f(\Phi) d\Psi(\Phi) \le  \int_I f(\Phi)\psi(\Phi)\,d\Phi\,.$ 
Furthermore,   replacing $f$ by $-f$  it follows that $ \int_I f(\Phi)\psi(\Phi)\,d\Phi
\le \int_I f(\Phi)\,d\Psi(\Phi) \,,$ and the integrals are equal. Therefore, since $\Phi$ is continuous, monotone  on $I$, by (6),
\[ \int_{\mathcal I} f d\Psi= \int_I f(\Phi)\,d\Psi(\Phi) =\int_I f(\Phi)\psi(\Phi)\,d\Phi, 
\]
(7) holds, and the conclusion obtains. 

To prove the converse, by (6) and the assertion we just proved, it suffices to show that, if $f(\Phi)$ is integrable   with respect to $\Psi(\Phi)$ on $I$,  $f(\Phi)\psi(\Phi)$ is integrable with respect to $\Phi$ on $I$.  
Let, then,     
 $\mathcal P =\{I_k\}$ be a partition of $I$, and, given $\varepsilon>0$, pick
$\xi_k\in I_k$ such that
\begin{equation}
U (f(\Phi)\psi(\Phi),\Phi,\mathcal P)
\le \sum_k   f(\Phi(\xi_k))  \psi(\Phi(\xi_k)) \big( \Phi (x_{k,r})- \Phi (x_{k,l})\big)+\varepsilon.
\end{equation}

Now, there are two types of summands in (12),  to wit, those where
$f(\Phi(\xi_k))>0$, and those where $ f(\Phi(\xi_k)) <0$. In the former case,    we have 
\begin{align*}f(\Phi(\xi_k)) \big(\psi(\Phi(\xi_k)) &\mp \inf_{\mathcal I_k} \psi\big)\, \big( \Phi (x_{k,r})- \Phi (x_{k,l})\big)
\\ 
&\le f(\Phi(\xi_k))\,{{\rm osc}}\,(\psi,\mathcal I_k)\, |\mathcal I_k|+ f(\Phi(\xi_k))\big( \inf_{\mathcal I_k} \psi\big)\, |\mathcal I_k|,
\end{align*}
where the first term is bounded by $M_f\,{{\rm osc}}\,(\psi,\mathcal I_k)\,|\mathcal I_k|$, and the  second  by
\begin{align*}   f(\Phi(\xi_k))\,\int_{[\Phi (x_{k,l}), \Phi (x_{k,r})]}\psi &=f(\Phi(\xi_k))\,
\big(\Psi(\Phi (x_{k,r}))- \Psi(\Phi (x_{k,l}))\big)\\
&\le \big(\sup_{I_k} f(\Phi)\big)\,\big(\Psi(\Phi (x_{k,r}))- \Psi(\Phi (x_{k,l}))\big)\,.
\end{align*}

Along similar lines,  since    $\int_{\mathcal I_k}\psi\le (\sup_{\mathcal I_k}\psi) \big(
\Phi (x_{k,r}))- \Phi (x_{k,l})\big)$,   in the latter case we have 
\begin{align*}f(\Phi(\xi_k))\psi (\Phi(\xi_k)) &\big( \Phi (x_{k,r})- \Phi (x_{k,l})\big)
\\
=(-f(\Phi(\xi_k)&))(-  \psi(\Phi(\xi_k))  \pm \sup_{\mathcal I_k} \psi\big)\, \big( \Phi (x_{k,r})- \Phi (x_{k,l})\big)
\\
\le M_f\,{{\rm osc}}&\,(\psi,\mathcal I_k)|\mathcal I_k| - f(\Phi(\xi_k))\big(-\sup_{I_k}\psi\big) \big( \Phi (x_{k,r})- \Phi (x_{k,l})\big)  
\\
\le M_f\, &{{\rm osc}}\,(\psi,\mathcal I_k)|\mathcal I_k| + \big(\sup_{I_k}f(\Phi)\big)\, \big( \Psi(\Phi(x_{k,r}))- \Psi(\Phi(x_{k,l}))\big ).
\end{align*}

Whence, combining these estimates it follows that  
\begin{align*}\sum_k   f(\Phi(\xi_k))\, &\psi(\Phi(\xi_k)) \, \big( \Phi (x_{k,r})- \Phi (x_{k,l})\big)
\\
&\le M_f \sum_{k }   {\rm {osc\,}}(\psi, \mathcal I_k ) \,|\mathcal I_k|
+   U( f(\Phi),  \psi(\Phi), \mathcal P)\,,
\end{align*}
and by (12),
\[U (f(\Phi)\psi(\Phi),\Psi(\Phi),\mathcal P)\le M_f \sum_{k }   {\rm {osc\,}}(\psi, \mathcal I_k ) \,|\mathcal I_k|
+   U( f(\Phi),  \psi(\Phi), \mathcal P) +\varepsilon. 
\]
Now, as in (11) it follows that
\begin{align*} U( f(\Phi)  \psi(\Phi), \Phi,\mathcal P) &-  L( f(\Phi)  \psi(\Phi), \Phi,\mathcal P)\big)\le 2\, M_f \sum_k {\rm osc}\,(\psi,\mathcal I_k)\,|\mathcal I_k|\nonumber\\
&+\big(
U( f(\Phi), \Psi(\Phi), \mathcal P) - L( f(\Phi), \Psi(\Phi), \mathcal P)\big)  +2\,\varepsilon,
\end{align*} 
and so, since $\varepsilon>0$ is arbitrary, picking an appropriate $\mathcal P$, by (2) we conclude that $f(\Phi)  \psi(\Phi)$ is Riemann-Stieltjes integrable, and the proof is finished.  
\taf\end{proof}

We are now ready to prove the substitution formula stated in the introduction. In this case $\psi$ is allowed to changes sign on $\Phi(I)$. 
\begin{theoremf}
\end{theoremf}
It suffices to prove the result when  $\varphi$ is positive, for when $\varphi$ is negative  on $I$ the result follows by a direct proof, or simply by replacing $\varphi$ by $-\varphi$, $\psi(x)$ by $\psi(-x)$, and $f(y)$ by $f(-y)$ in (1).
Let $f$ be integrable with respect to $\Psi$ on $\Phi(I)$. The idea is to show that $\int_{\Phi(I)}f \,d\Psi$  can be approximated arbitrarily close by the Riemann sums of $f(\Phi)\psi(\Phi)$ with respect to $\Phi$ on $I$, and, consequently,  $\int_I f(\Phi)\psi(\Phi)\,d\Phi$  also exists, and the integrals are equal  \cite{Bagby}, \cite{Pries}, \cite{Torch1}. 
To make this argument precise we  begin by introducing the partitions used for the approximating Riemann sums. They are based on a partition  $\mathcal Q$ of $\Phi(I)$ defined as follows: given $\eta >0$, by (4),  there is a partition $\mathcal Q=\{\mathcal I_k\}$  of $\Phi(I)$, such that
\begin{equation} \sum_{k} {\rm {osc\, }}(\psi, \mathcal I_k)\, |\mathcal I_k|\le \eta^2 |I|\,.
\end{equation}

We  first separate the indices $ k$ that appear in $\mathcal Q$ into  three classes,    the (good) set $G$, the (bounded) set $B$, and the (undulating) set $U$, according to the following criteria. First, $k\in G$ if  $\psi$ is strictly positive or negative on $\mathcal I_k$. Next, $k\in B$, if $k\notin G$ and $|\psi|\le \eta$ on $\mathcal I_k$. And, finally, $k\in U$, if $k\notin G\cup B$.  Note that for $k\in U$  we have  $ {\rm {osc\, }}(\psi,  \mathcal I_k)\ge \eta$, since $\psi$ changes signs in $\mathcal I_k$ and for at least one point $\zeta_k$ there, $| \psi(\zeta_k)|> \eta$. 

Recall that to each  subinterval  $\mathcal I_k=[\Phi(x_{k,l}),\Phi(x_{k,r})]$
of $\Phi(I)$ corresponds an interval  $I_k=[x_{k,l},x_{k,r}]$, and let $\mathcal P$ denote the partition of $I$ given by $\mathcal P=\{I_k\}$.

Now, since  $f$ is integrable with respect to $\Psi$ on $\Phi( I)$, $f$ is integrable with respect to $\Psi$ on $\mathcal I_k$, and if $k\in G$,  since $\varphi$ and $\psi$ don't change sign, by the Lemma,  $ f(\Phi)\psi(\Phi) $ 
is integrable with respect to $\Phi$ on $I_k$, and 
$\int_{\mathcal I_{k}} f\,d\Psi = \int_{I_k}f(\Phi)\,\psi(\Phi) d\Phi$.
Then, by (4), given $\eta>0$, there is a partition  $\mathcal P^k= \{I_j^k\}$ of $I_k$ such that 
\begin{equation*}
\sum_j {{\rm osc}}\,( f(\Phi)\psi(\Phi),I_j^k )\, |\mathcal I_j^k| \le \eta \,|I_k|\,.
\end{equation*} 
Moreover, since $  \int_{\mathcal I_k} f \,d\Psi\le  U(f(\Phi)\psi(\Phi),\Phi, \mathcal P^k)$, 
we also have
\begin{equation*} 
 U(f(\Phi)\psi(\Phi),\Phi, \mathcal P^k) -\int_{\mathcal I_k} fd\Psi\le\eta\,|I_k|\,.
\end{equation*}

Hence,
\begin{equation}
{{\sum_{k\in G}\sum_j \rm osc}}\,(f(\Phi)\psi(\Phi), I^k_j)\, |\mathcal I^k_j|\le \eta \sum_{k\in G} |I_k|,
\end{equation}
and
\begin{equation}
\sum_{k\in G}\Big| \int_{\mathcal I_{k}} fd\Psi - U(f(\Phi)\psi(\Phi),\Phi, \mathcal P^k)\Big| \le  \eta \sum_{k\in G} |I_k|. 
\end{equation}


Now, for $k\in B\cup U$, let $\mathcal P^k=\{I_k\}$ denote the partition of $I_k$ consisting of the interval $I_k$. Note that, with $M_{\varphi}$ a bound for ${\varphi}$, 
\begin{equation}|\Phi(x_{k,r})-\Phi(x_{k,l})|=|\mathcal I_{k}|\le \int_{[x_{k,l},x_{k,r}]}|\varphi| \le M_{\varphi}\,|I_k|,
\end{equation}
and, with $M_{\psi}$ a bound for ${\psi}$, 
\begin{equation} \Big| \int_{\mathcal I_{k}} f d\Psi \Big|\le M_f \big|\Psi(\Phi(x_{k,r}))-\Psi(\Phi(x_{k,l}))\big|
\le  M_f M_{\psi} M_{\varphi} | I_{k}|.
\end{equation}

Thus, by (16), 
\begin{equation}
{{\rm osc}}\,(f(\Phi)\psi(\Phi), I_k) |\mathcal I_k|\le  2\,  M_f M_{\psi} M_{\varphi}\, |I_k|,
\end{equation}
and   by (17),  for $\xi_k\in I_k$, 
\begin{equation*}\Big| \int_{\mathcal I_{k}} f d\Psi - f(\Phi(\xi_k))\psi(\Phi(\xi_k))\big(\Phi(x_{k,r})-\Phi(x_{k,l})\big)
\Big|\le  2\,  M_f M_{\psi} M_{\varphi}\, |I_k|,
\end{equation*}
and so,   picking  $\xi_k\in I_k$ appropriately,  we get
\begin{equation}\Big| \int_{\mathcal I_{k}} f d\Psi - U( f(\Phi)\psi(\Phi),\Phi\,, \mathcal P^k)\Big|  \le 3 \, 
 M_fM_{\psi} M_{\varphi}|I_k|.
\end{equation}



Now, if $k\in B$,  $M_\psi\le\eta$, and, therefore, by (18),
\begin{equation}
{{\sum_{k\in B} \rm osc}}\,(f(\Phi)\psi(\Phi), I_k)\, |\mathcal I_k|\le  2\,  M_f M_{\varphi}
\,\eta \sum_{k\in B} |I_k|
\end{equation}
and  by (19), 
\begin{equation}
\sum_{k\in B}\Big| \int_{\mathcal I_{k}} fd\Psi - U(f(\Phi)\psi(\Phi),\Phi, \mathcal P^k)\Big|
\le 3\,  M_f M_{\varphi}\, \eta \sum_{k\in B} |I_k|. 
\end{equation}

Finally, since for $k\in U$ we have osc\,($\psi, \mathcal I_k)\ge\eta$, from (13)    it follows that
\[\eta\sum_{k\in U}|\mathcal I_k| \le\sum_{k\in U} {\rm {osc\, }}(\psi,  \mathcal I_k)\, |\mathcal I_k|\le \sum_{k} {\rm {osc\, }}(\psi, \mathcal I_k)\, |\mathcal I_k|\le \eta^2 |I|\,,\]
and, consequently,
\begin{equation} \sum_{k\in U} |\mathcal I_k|\le  \eta\,|I|.
\end{equation} 

Whence, by (18) and (22), the $U$ terms are bounded by 
\begin{equation}\sum_{k\in U} {{\rm osc}}\, (f(\Phi)\psi(\Phi), I_k) |\mathcal I_k|\le  2 M_f M_{\psi} M_{\varphi}\sum_{k\in U} | I_k|\le  2 M_f M_{\psi} M_{\varphi}\,\eta\, |I|,
\end{equation}
and,  by (19) and (22), 
\begin{align}\sum_{k\in U} \Big| \int_{\mathcal I_{k}} f -    U( f(\Phi) &\psi(\Phi),\Phi, \mathcal P^k) \,  \Big|
\nonumber\\
&\le  3\, M_f M_{\psi} M_{\varphi}\,\sum_{k\in U}  |\mathcal I_k| 
  \le 3\, M_f M_{\psi}\,M_{\varphi} \,\eta\, |I|.
\end{align}
 
Consider now the partition $\mathcal P'$ of $I$ that consists of the union of all the intervals in the $\mathcal P^k$, where each $\mathcal P^k$ 
is defined  according as to whether $k \in G, k \in B$, or $k\in U$. Then, by (14), (20), and (23),
\begin{align}\sum_{k\in G}\sum_j {{\rm osc}}\, (f(\Phi)\psi(\Phi),\mathcal I_j^k )\,&|\mathcal I_j^k|  \nonumber\\
+\sum_{k\in B}
{{\rm  osc}} \, (f(\Phi)\psi(\Phi)&, \mathcal I^k)\,|\mathcal I^k|+ \sum_{k\in U} {{\rm osc}}\, (f(\Phi)\psi(\Phi), \mathcal I^k) |\mathcal I^k|\nonumber
\\
\le \eta\sum_{k\in G}|I_k|+& 2\, M_f M_{\varphi}\,\eta \sum_{k\in B}|I_k| + 2\, M_f M_{\psi} M_\varphi\,\eta\, |I|\nonumber
\\
\le 
 \big( 1& + 2\, M_f M_\varphi + 2\, M_f M_\psi M_\varphi\big)\,\eta\,|I|. 
\end{align}

Given $\varepsilon>0$, pick $\eta > 0$ so that
$( 1 + 2\, M_f M_\varphi+ 2\, M_f M_\psi M_{\varphi})\, \eta\,|I|\le \varepsilon$, 
and note that the above expression is $<\varepsilon$, and 
 since $\varepsilon>0$ is arbitrary and $\Phi$ is monotone,  (4) corresponding to $\mathcal P'$  implies that $f(\Phi)\psi(\Phi)$ is Riemann-Stieltjes integrable, and $L(f(\Phi)\psi(\Phi),\Phi)= U(f(\Phi)\psi(\Phi),\Phi) =\int_I f(\Phi)\,\psi(\Phi)\,d\Phi$.

It remains to compute the integral in question. First,  note that 
\begin{equation}U(f(\Phi)\psi(\Phi),\Phi, \mathcal P') =\sum_k U(f(\Phi)\psi(\Phi),\Phi, \mathcal P^k)\,.
\end{equation}
Moreover, since
$\Phi(b)-\Phi(a)=\sum_k \big(\Phi(x_{k,r})-\Phi(x_{k,l})\big)$,
by the linearity of the integral,
taking  orientation into account,   
 it follows that $\int_{\mathcal I}f =\sum_{k} \int_{\mathcal I_{k}} f$, \cite{Robbins}, \cite{Thomson1}. Hence, regrouping according to the sets $G, B$ and $U$, gives 
\begin{equation}
\int_{\mathcal I} fd\Psi =\sum_{k\in G} \int_{\mathcal I_{k}} fd\Psi + \sum_{k\in B} \int_{\mathcal I_{k}} f d\Psi+\sum_{k\in U} \int_{\mathcal I_{k}} f d\Psi, 
\end{equation}
and, from (26) and (27), it follows that
\begin{align*} \Big|\int_{\mathcal I} fd\Psi  -  U( &f(\Phi)\psi(\Phi),\Phi\,, \mathcal P')\Big| \le  
 \sum_{k\in G}\Big| \int_{\mathcal I_k} f d\Psi -  U( f(\Phi)\psi(\Phi),\Phi\,, \mathcal P^k)\Big|
\\
 +  \sum_{k\in B} \Big|& \int_{\mathcal I_k} f d\Psi -  U  ( f(\Phi)\psi(\Phi),\Phi\,, \mathcal P^k)\Big|
\\
&+  \sum_{k\in U}\Big| \int_{\mathcal I_k} f d\Psi - U( f(\Phi)\psi(\Phi),\Phi, \mathcal P^k)\Big|
= s_1 +s_2+ s_3,
\end{align*}
say.   Now, by (15), 
\[  s_1\le  \sum_{k\in G} \Big| \int_{\mathcal I_k}  f d\Psi-  U( f(\Phi)\psi(\Phi),\Phi\,, \mathcal P^k) \Big|\ \le  \eta\, \sum_{k\in G} | I_k|\le \eta\,|I|,\]
and by (21) and (24), $ s_2 +s_3 \le 
\big( 3   M_f M_\varphi + 3 M_f M_\psi M_{\varphi} \big)\, \eta\,|I|,$
which combined imply that 
\[\Big| \int_{\mathcal I} f d\Psi - U(f(\Phi))\psi(\Phi),\Phi, \mathcal P')\Big| \le   ( 1 + 3   M_f M_\varphi + 3 M_f M_\psi M_{\varphi} )\, \eta\,|I|.\]

Given $\varepsilon>0$, pick $\eta > 0$ so that
$ ( 1 + 3   M_f M_\varphi + 3 M_f M_\psi M_{\varphi} )\, \eta\, |I|\le \varepsilon$, 
and note that 
\begin{equation}\Big| \int_{\mathcal I}f d\Psi- U( f(\Phi)\psi(\Phi),\Phi, \mathcal P')\Big| \le  \varepsilon.
\end{equation}
Also, since $U(f(\Phi)\psi(\Phi), \Phi, \mathcal P') -L(f(\Phi)\psi(\Phi), \Phi, \mathcal P')$ is equal to the left-hand side of (25), from (28) it follows that
\begin{equation} \Big| \int_{\mathcal I}f d\Psi- L( f(\Phi)\psi(\Phi),\Phi, \mathcal P')\Big| \le 2 \,\varepsilon\,.
\end{equation}

Furthermore, since by (28),
\begin{align*}\int_I f(\Phi)\psi(\Phi) d\Phi &= U(f(\Phi)\psi(\Phi),\Phi)\\
&\le U(f(\Phi)\psi(\Phi),\Phi, \mathcal P') \le \int_{\mathcal I} f d\Psi+ \varepsilon,
\end{align*}
and by (29),
\begin{align*}\int_{\mathcal I} f d\Psi &\le  L(f(\Phi)\psi(\Phi),\Phi, \mathcal P')+ 2\,\varepsilon
\\
& \le 
 L(f(\Phi)\psi(\Phi),\Phi)+2\, \varepsilon = 
\int_I f(\Phi)\psi(\Phi)\,d\Phi+2\,\varepsilon,
\end{align*}
   we conclude that 
 \[\Big|\int_{\mathcal I} f d\Psi-\int_I f(\Phi)\psi(\Phi)\,d\Phi \Big|\le 2\, \varepsilon,
 \] which, since $ \varepsilon$  is arbitrary, implies that
$\int_I f(\Phi)\psi(\Phi)\,d\Phi =\int_{\mathcal I} f\,d\Psi.$ Hence, (1) holds, and the proof is  finished.
\taf

\section{The Change of Variable Formula.}
The next result corresponds to the case when $\varphi$ is of variable sign, and in this case  the substitution is not required to be invertible.  Then $\Phi(I)$, the
range of $\Phi$, is an interval, but $\Phi(a),  \Phi(b)$ are not necessarily endpoints of
this interval. It is important to keep in mind that the Riemann integral is
oriented, and that the direction in which the interval is traversed determines
the sign of the integral. When $\Psi(x)=x$, the formula is related to the  general formulation by Preiss and Uher \cite{Pries}
of  Kestelman's result pertaining the  change of variable formula for the Riemann integral  \cite{Davies}, \cite{Kestelman}.  
 In fact,  the integral on the right-hand side of (1) can be computed as  a Riemann integral \cite{Lopez1}, \cite{Torch1}, to wit,
\begin{equation*}
 \int_I  f(\Phi)\,\psi(\Phi)\, d\Phi\ = \int_I f(\Phi) \psi(\Phi) \varphi\,.
 \end{equation*}

Specifically, we  have,

\begin{theoremd}  
 Let $\varphi$ be a bounded,  Riemann integrable function defined on an interval $I=[a,b]$, and let $\Phi$ be an indefinite integral of $\varphi$ on $I$. 
Let $\psi$ be a bounded,  Riemann integrable function defined on $\Phi(I)$,
the range of $\Phi$, that does not change sign on $\Phi(I)$,  
and let $\Psi$ be an indefinite integral of $\psi$.
 
Let $f$ be a bounded function  defined on $\Phi(I)$. Then,  $f$ is   Riemann integrable with respect to $\Psi$ on $\Phi(I)$ iff $ f(\Phi)\psi(\Phi) $ is Riemann integrable with respect to $\Phi$ on $I$, and in that case,  
with $\mathcal I=[\Phi(a), \Phi(b)]$, 
\begin{equation} \int_{\mathcal I} f d\Psi= \int_I  f(\Phi)\,\psi(\Phi) d\Phi.
\end{equation}
\end{theoremd}

\begin{proof}
The proof of the necessity follows along the lines of the substitution formula, and we shall be brief. Note that since replacing $\psi$ by $-\psi$ in (30) preserves the identity, it suffices to assume that $\psi$ is positive.  
So,  suppose  that $f$ is integrable with respect to $\Psi$ on $\Phi( I)$, and let  the partition $\mathcal P$ of $I$ be defined as follows: given $\eta >0$, by (4),  there is a partition $\mathcal P=\{I_k\}$  of $I$, such that
\begin{equation} \sum_{k} {\rm {osc\, }}(\varphi,  I_k)\, |I_k|\le \eta^2 |I|\,.
\end{equation}

Separate the indices $ k$ that appear in $\mathcal P$ into  three classes, $G$,  $B$, and   $U$,   according to the following criteria. First, $k\in G$ if  $\varphi$ is strictly positive or negative on $I_k$. Next, $k\in B$, if $k\notin G$ and $|\varphi|\le \eta$ on $I_k$. And, finally, $k\in U$, if $k\notin G\cup B$.  Note that for $k\in U$, since $\varphi$ changes signs in $I_k$ and for at least one point $\xi_k$ there, $| \varphi(\xi_k)|> \eta$, we have  $ {\rm {osc\, }}(\varphi,  I_k)\ge \eta$. 

Recall that each $I_k=[x_{k,l},x_{k,r}]$ in $\mathcal P$   corresponds to the (oriented) subinterval  $\mathcal I_k=[\Phi(x_{k,l}),\Phi(x_{k,r})]$
of $\Phi(I)$. Now, since  $f$ is  integrable with respect to $\Psi$ on $\Phi( I)$, $f$ is integrable with respect to $\Psi$ on $\mathcal I_k$, and if $k\in G$,  since $\varphi$ and $\psi$ don't change sign, by  the Lemma, $ f(\Phi)\psi(\Phi) $ 
is integrable $\Phi$ on $I_k$, and by (7), 
$\int_{\mathcal I_{k}} f\,d\Psi = \int_{I_k}f(\Phi)\,\psi(\Phi) d\Phi$.
Then, by (4), given $\eta>0$, for each $k\in G$, there is a partition  $\mathcal P^k= \{I_j^k\}$ of $I_k$, such that 
$\sum_j {{\rm osc}}\,( f(\Phi)\psi(\Phi),I_j^k )\, |\mathcal I_j^k| \le \eta\, |I_k|,
$ and, therefore, 
\begin{equation}
\sum_{k\in G}\sum_j {{\rm osc}}\,( f(\Phi)\psi(\Phi),I_j^k )\, |\mathcal I_j^k| \le \eta \sum_{k\in G}|I_k|.
\end{equation} 

Now, for $k\in B\cup U$, let $\mathcal P^k=\{I_k\}$ denote the partition of $I_k$ consisting of the interval $I_k$. Then, as in (18) and (19), 
\begin{equation}
{{\rm osc}}\,(f(\Phi)\psi(\Phi), I_k) |\mathcal I_k|\le  2\,  M_f  M_{\psi} M_{\varphi}\, |I_k|\,,
\end{equation}
and   
\begin{equation*}\Big| \int_{\mathcal I_{k}} f d\Psi - U( f(\Phi)\psi(\Phi),\Phi\,, \mathcal P^k)\Big|  \le 3 \, 
 M_f\,M_{\psi} M_{\varphi}\,|I_k|\,.
\end{equation*}

Now, if $k\in B$,  $M_\varphi\le\eta$, and, therefore, by (33), 
\begin{equation}
{{\sum_{k\in B} \rm osc}}\,(f(\Phi)\psi(\Phi), I_k)\, |\mathcal I_k|\le  2\,  M_f  M_{\psi}\,
 \eta \sum_{k\in B} |I_k|
\end{equation}
and  
\begin{equation*}
\sum_{k\in B}\Big| \int_{\mathcal I_{k}} fd\Psi - U(f(\Phi)\psi(\Phi),\Phi, \mathcal P^k)\Big|
\le 3\,  M_f\,M_{\psi}\, \eta \sum_{k\in B} |I_k|\,. 
\end{equation*}

Finally, since for $k\in U$ we have $\sum_{k\in U} |I_k|\le  \eta\,|I|$,
by (33), as in   (23), the $U$ terms are bounded by 
\begin{equation}\sum_{k\in U} {{\rm osc}}\, (f(\Phi)\psi(\Phi), I_k) |\mathcal I_k|\le  2 M_f  M_{\psi} M_{\varphi}\,\eta\, |I|\,,
\end{equation}
and, as in   (24), 
\begin{equation*}\sum_{k\in U} \Big| \int_{\mathcal I_{k}} f -    U( f(\Phi) \psi(\Phi),\Phi, \mathcal P^k) \,  \Big|
  \le 3\, M_f\, M_{\psi}\,M_{\varphi}\,\eta\, |I|.
\end{equation*}
 
Consider now the partition $\mathcal P'$ of $I$ that consists of the union of all the partitions $\mathcal P^k$, where each $\mathcal P^k$ 
is defined  according as to whether $k \in G, k \in B$, or $k\in U$. Then, by (32), (34), and (35), 
\begin{align*}\sum_{k\in G}\sum_j {{\rm osc}}\, (f(\Phi)\psi(\Phi)&,\mathcal I_j^k )\, |\mathcal I_j^k|  +\sum_{k\in B\cup U}
{{\rm  osc}} \, (f(\Phi)\psi(\Phi), \mathcal I^k)\,|\mathcal I^k|\nonumber\\
&\le 
 \big( 1 + 2 M_f M_\psi + 2 M_f M_\psi M_\varphi\big)\,\eta\,|I|\,. 
\end{align*}

Given $\varepsilon>0$, pick $\eta > 0$ so that
$( 1 + 2 M_f M_\psi+ 2 M_f M_\psi M_{\varphi})\,\eta\,|I|\le \varepsilon$, 
and note that the above expression is $<\varepsilon$, and so, 
 since $\varepsilon>0$ is arbitrary,   (4) corresponding to $\mathcal P'$ implies that $f(\Phi)\psi(\Phi)$ is Riemann-Stieltjes integrable on $I$ and $L(f(\Phi)\psi(\Phi),\Phi)= U(f(\Phi)\psi(\Phi),\Phi) =\int_I f(\Phi)\,\psi(\Phi)\,d\Phi$.

Making use of the integral estimates established above, $\int_I f(\Phi)\,\psi(\Phi)\,d\Phi$ can be  evaluated exactly as  in the previous lemma; the computation is left to the reader.

As for the converse, it suffices to prove that,  if $ f(\Phi)\psi(\Phi)$ is integrable with respect to $\Phi$ on $I$,   $f$ is integrable with respect to $\Psi$ on $\Phi(I)$, and  then invoke   the result we  just proved. 
Let the partition $\mathcal P$ of $I$  satisfy (31). Since $\Phi$ is continuous, $\Phi(I)$ is a closed interval of the form $ [ \Phi(x_m),\Phi(x_M) ]$  with (possibly non-unique) $x_m,x_M$ in $I$. If  $x_m$ or $x_M$ is an endpoint  of (not necessarily the same) interval in $\mathcal P$, proceed. Otherwise,
since for an interval $ J=[x_l,x_r]$ and  an interior  point $x$ of $  J$, with $  J_l=[x_l,x]$ and $  J_r=[x,x_r]$, we have 
\begin{equation}{{\rm osc}}\,(f,  J_l)\,|  J_l|+ {{\rm osc}}\, (f,  J_r)\,|  J_r|\le  {\rm osc}\, (f,  J)\,|  J|\,,
\end{equation}
 $\mathcal P$ can be refined so that  the endpoint that was not  originally  included is now an endpoint
of two intervals of the new partition, without increasing the right-hand side
of (31). For simplicity also denote this new partition  $\mathcal P$, note that it contains both $x_m$ and $x_M$ at least once as an endpoint of one of its intervals,   and define  the sets of indices   $G,B$, and $U$,  associated to $\mathcal P$,   as above. 

Now, if $f(\Phi)\psi(\Phi)$ is integrable with respect to $\Phi$ on $I$, $ f(\Phi)\psi(\Phi)$ is integrable with respect to $\Phi$ on $I_k$, and, if $k\in G$, since $\varphi$ is of constant sign,  by the lemma, $ f $ 
is integrable with respect to $\Psi$ on $\mathcal I_k$ and
$\int_{I_k}f(\Phi)\,\psi(\Phi)\,d\Phi =\int_{\mathcal I_{k}} f d\Psi$. 
Then, by (4), given $\eta>0$, there is a partition  $\mathcal Q^k= \{{\mathcal I}_j^k\}$ of $\mathcal I_k$, where ${\mathcal I}_j^k=[\Phi(x_{j,l}^k),\Phi(x_{j,r}^k)]$,            such that 
\begin{equation} \sum_j {{\rm osc}}\, (f, \mathcal I_j^k)\,\big|\Psi(\Phi(x_{j,r}^k))-\Psi(\Phi(x_{j,l}^k))\big|\le \eta \,|I_k|\,.
\end{equation}

As for $k\in B\cup U$,  as in (34), 
\[ {{\rm osc }}\, (f, \mathcal I_k)\,\big|\Psi(\Phi(x_{k,r}))-\Psi(\Phi(x_{k,l}))\big| \le 2 M_f M_{\psi}M_\varphi\,|I_k|\,.
\]
Next, if $k\in B$,   $M_\varphi\le\eta$, and, therefore, 
\begin{equation} \sum_{k\in B} {{\rm osc}}\, (f, \mathcal I_k)\,\big|\Psi(\Phi(x_{k,r}))-\Psi(\Phi(x_{k,l}))\big|\le  2 M_f M_\psi \,\eta \sum_{k\in B}|I_k|\,.
\end{equation}
Finally, for $k\in U$, as in (35), 
\begin{align}\sum_{k\in U} {{\rm osc}}\,( f,\mathcal I_k)& \big|\Psi(\Phi(x_{k,r}))-\Psi(\Phi(x_{k,l}))\big|\nonumber
\\&\le 2\, 
 M_f\,M_\psi\, M_{\varphi}\sum_{k\in U} |I_k|\le  2\, M_f\,M_\psi\,M_{\varphi}\,\eta\, |I|.
\end{align}
 
Let $\mathcal Q'$ denote the collection   of subintervals of $\Phi(I )$  defined by 
\[\mathcal Q'= \big( \bigcup_{k\in G} \bigcup_j  \{\mathcal I_j^k\} \big) \cup \big( \bigcup_{k\in B\cup U}\{\mathcal I_k\}\big).\] 
Note that the union of the intervals in $\mathcal Q'$ is $\Phi(I)$  and that, by (37), (38), and (39),  
\begin{align} \sum_{k\in G}\sum_j {{\rm osc}}\, (f,\mathcal I_j^k )\, &\big|\Psi(\Phi(x_{k,r}^j))-\Psi(\Phi(x_{k,l}^j))\big|\nonumber
\\+\sum_{k\in B\cup U}&{{\rm osc}}\,( f,\mathcal I_k) \big|\Psi(\Phi(x_{k,r}))-\Psi(\Phi(x_{k,l}))\big|
\nonumber\\
&\le  \big( 1 + 2 M_f M_\psi + 2 M_fM_\psi M_\varphi\big)\,\eta\,|I|. 
\end{align}

Consider now the finite set  $\Phi(x_m)=y_1< y_2<  \cdots  <\Phi(x_M)=y_h$,  of   the endpoints of the intervals in $\mathcal Q'$  arranged in an increasing fashion, without repetition. Suppose that the interval $\mathcal J$ in $\mathcal Q'$ contains the points $y_{k_1}, \ldots, y_{k_n}$, say,  as endpoints or interior points. If they are endpoints, disregard them, otherwise, as in (36),  incorporate each as an  endpoint of two intervals in a refined  $\mathcal Q'$ without  increasing the right-hand side of (40). Clearly $\mathcal Q'$ thus refined contains a partition $\mathcal Q'' =\{\mathcal J_k\}$ of $\Phi(I)$, which, by    (40), satisfies,
\[ \sum_k {\rm osc}\,(f,\mathcal J_k)\,|\mathcal J_k|\le \big( 1 + 2 M_f M_\psi + 2 M_f M_\psi M_\varphi\big)\,\eta\,|I|\,. 
\]

Given $\varepsilon>0$, pick $\eta>0$ such that  $\big( 1+ 2 M_f M_\psi +2 M_f M_\psi M_{\varphi}\big)\,\eta\,|I|\le \varepsilon$.
 Then  the sum in (4) corresponding to $\mathcal Q''$ does not exceed an arbitrary $\varepsilon>0$, and, therefore, $f$ is integrable with respect to $\Psi$ on $\Phi(I)$, and the conclusion  follows from the first part of the proof.
\taf \end{proof}

\section{Coda.}
We close this note with a  caveat: not always the most general result is the most useful. By strengthening some assumptions and  weakening others in the change of variable formula, it is possible to obtain a  substitution formula that does not follow from this result \cite{deO}.  

Assume  that the function $\Phi$ is continuous,  increasing on $I=[a,b]$, and differentiable on $(a,b)$  with derivative $\varphi\ge 0$;  then $\Phi$ is  uniformly continuous on $I$, and  maps $I$ onto $\mathcal I=[\Phi(a),\Phi( b)]$. Assume  that $\Psi$ is continuous,  increasing on $\mathcal I$, and differentiable on $(\Phi(a),\Phi(b))$  with derivative $\psi\ge 0$.  We will also assume that $f$ is  Riemann integrable, rather than bounded, on $ \mathcal I$. On the other hand, we will not assume  that $\varphi,\psi$ are bounded.
Then,  if   $f(\Phi)\psi(\Phi)$ is integrable with respect to $\Phi$ on $I$, the change of variable  formula holds.



To see this, consider a partition $\mathcal P=\{I_k\}$, $I_k=[x_{k,l},x_{k,r}]$, of $I$, and  the corresponding partition $\mathcal Q=\{\mathcal I_k\}$ 
of $\mathcal I$,  consisting of 
$\mathcal I_k=[y_{k,l},y_{k,r}]$, where  $y_{k,l}=\Phi(x_{k,l})$ and  $y_{n,r}=\Phi(x_{k,r})$. 
By the mean value theorem  there exist $\zeta_k\in \mathcal I_k$ such that 
\begin{equation*}
\Psi(y_{k,r})-\Psi(y_{k,l})= \psi(\zeta_k)\,\big(y_{k,r}-y_{k,l}\big)\,,\quad {{\rm all }}\ k\,,
\end{equation*}
and with $\xi_k\in I_k$ such that $\zeta_k=\Phi(\xi_k)$, all $k$, it follows that 
\[ \sum_k f(\zeta_k)\,\big( \Psi(y_{k,r})-\Psi(y_{k,l}) \big)= \sum_k   f(\Phi(\xi_k ))\,\psi(\Phi(\xi_k))\,\big(\Phi(x_{k,r})-\Phi(x_{k,l})\big), 
\]
where the left-hand side is a Riemann sum of $f$ with respect to $\Psi$ on $\mathcal I$, and the right-hand side a Riemann sum of $f(\Phi)\psi(\Phi)$ with respect to $\Phi$ on $ I$.
Since by the  uniform continuity of $\Phi$ it follows that $\max_k |I_k|\to 0$  
implies $\max_k|\mathcal I_k|\to 0$,  by the integrability assumptions, for appropriate partitions $\mathcal P$ the left-handside above tends to
$\int_{\mathcal I} f\,d\Psi$, and the right-hand side to $\int_{I}f(\Phi)\psi(\Phi)\,d\Phi$. Hence the change of variable formula holds. 

This observation applies in the  following setting. On $I=\mathcal I=[\,0,1]$, with $0<\varepsilon, \eta<1$,  let $ \Phi(x)= x^{1-\varepsilon}$,  $\varphi(x)= (1-\varepsilon)\, x^{-\varepsilon}$ for $x\in (0,1]$, and $\Psi(y)=y^{1-\eta}$,  $\psi(y)=(1-\eta)\,y^{-\eta}$ for $y\in (0, 1]$; $\varphi$ and $\psi$ are unbounded.
Then, for an integrable function $f$ on $\mathcal I$, provided that $f(\Phi)\,\psi(\Phi)$ is integrable with repect to $\Phi$ on $I$, the change of variable formula holds. For $f$ we may take a  continuous function of order $x^\beta$ near the origin,  where $\beta\ge \varepsilon/(1-\varepsilon)+\eta$.

\end{document}